\newtheorem{theorem}{Theorem}
\newtheorem{corollary}[theorem]{Corollary}
\newtheorem{conjecture}[theorem]{Conjecture}
\newtheorem{lemma}[theorem]{theorem}
\newtheorem{proposition}[theorem]{Proposition}
\newtheorem{problem}[theorem]{Problem}
\theoremstyle{definition}
\newtheorem{definition}[theorem]{Definition}
\theoremstyle{remark}
\newtheorem{remark}[theorem]{Remark}
\DeclareMathOperator{\Irr}{Irr}
\begin{document}

\title[A Note on Average of Roots of Unity]
 {A Note on Average of Roots of Unity}

\author[C. Panraksa]{Chatchawan Panraksa}
\address[Chatchawan Panraksa]{Science Division, Mahidol University International College\\
999 Phutthamonthon 4 Road, Salaya, Nakhonpathom, Thailand 73170}
\email{chatchawan.pan@mahidol.ac.th}

\author[P. Ruengrot]{Pornrat Ruengrot}
\address[Pornrat Ruengrot]{Science Division, Mahidol University International College\\
999 Phutthamonthon 4 Road, Salaya, Nakhonpathom, Thailand 73170}
\email{pornrat.rue@mahidol.ac.th}


\date{\today}





\begin{abstract}
 We consider the problem of characterizing all functions $f$ defined on the set of integers modulo $n$ with the property that an average of some $n$th roots of unity determined by $f$ is always an algebraic integer. Examples of such functions with this property are linear functions. We show that, when $n$ is a prime number, the converse also holds. That is, any function with this property is representable by a linear polynomial. Finally, we give an application of the main result to the problem of determining self perfect isometries for the cyclic group of prime order $p$.
\end{abstract}

\maketitle
\section{Introduction}

Let $n$ be a positive integer. Denote by $\mathbb{Z}_n=\{0,1,\ldots,n-1\}$ the ring of integers modulo $n$. Let $\omega=e^{2\pi i/n}$ be a primitive $n$th root of unity. In this work, we consider the following problem.

\begin{problem}\label{the-problem}
  Suppose $f:\mathbb{Z}_n\longrightarrow \mathbb{Z}_n$ is a function such that the average
\begin{equation}\label{eqn:integrality}
  \mu_f^{a,b} = \frac{1}{n}\sum_{x=0}^{n-1}\omega^{af(x)+bx} \quad\text{is an algebraic integer for every } a,b\in\mathbb{Z}_n.
\end{equation}
What can be said about the function $f$?
\end{problem}

When $n$ is a power of prime, such a problem is related to the problem of finding self perfect isometries (as defined in \cite{Broue1990isometries}) for cyclic $p$-groups, since the condition \eqref{eqn:integrality} is the integrality condition for perfect characters. More explanations on this relationship are given in the last section.

\section{Preliminaries}
We shall use the symbol $(=)$ to denote ordinary equality. The symbol $(\equiv)$ will be used to denote congruence $\pmod n$(i.e., equality in $\mathbb{Z}_n$).

\begin{definition}
  We say that a function $f:\mathbb{Z}_n\longrightarrow \mathbb{Z}_n$ is a \emph{polynomial function} if there exists a polynomial $F\in \mathbb{Z}_n[X]$ such that
  \[f(x)\equiv F(x)   \quad\text{ for all }x=0,1,\ldots, n-1.\]
\end{definition}

First we show that any function representable by a linear polynomial function satisfies the condition \eqref{eqn:integrality}.

\begin{lemma}\label{lem:linear-implies-integrality}
  Suppose that $f:\mathbb{Z}_n\longrightarrow \mathbb{Z}_n$ is a linear polynomial function, then $\mu_f^{a,b}$  is an algebraic integer for every $a,b\in\mathbb{Z}_n$.
\end{lemma}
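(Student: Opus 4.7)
The plan is to do a direct computation: substitute the linear form of $f$ into the defining sum, factor out a constant phase, and recognize the remaining sum as a standard geometric sum over all $n$th roots of unity, which collapses to $0$ or $n$.

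More precisely, suppose $f$ is represented by the linear polynomial $F(X) = cX + d \in \mathbb{Z}_n[X]$, so that $f(x) \equiv cx + d \pmod{n}$ for every $x$. Fix $a, b \in \mathbb{Z}_n$. Since $\omega$ has order $n$, the exponent $af(x)+bx$ only matters modulo $n$, so I may replace $af(x)+bx$ by $(ac+b)x + ad$. Pulling the constant factor $\omega^{ad}$ outside the sum gives
\begin{equation*}
  \mu_f^{a,b} \;=\; \omega^{ad}\cdot\frac{1}{n}\sum_{x=0}^{n-1}\omega^{(ac+b)x}.
\end{equation*}

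The inner geometric sum is the standard one: setting $k = ac+b \pmod n$, the sum equals $n$ when $k \equiv 0$ and equals $0$ otherwise (since $\omega^k$ is then a nontrivial $n$th root of unity and $\sum_{x=0}^{n-1}(\omega^k)^x = \frac{\omega^{kn}-1}{\omega^k-1} = 0$). Therefore $\mu_f^{a,b}$ is either $0$ or $\omega^{ad}$; in both cases it is an algebraic integer, since $\omega^{ad}$ is a root of $X^n-1$.

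I do not foresee any real obstacle here: the argument is essentially the orthogonality of characters of $\mathbb{Z}_n$, and the only mild point to be careful about is that $a,b,c,d$ are elements of $\mathbb{Z}_n$ while the exponents must be interpreted as ordinary integers, which is fine because $\omega^n = 1$. The lemma is thus a one-line consequence of the geometric series identity, and it is included mainly to motivate the converse direction that the paper goes on to establish.
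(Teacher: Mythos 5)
Your proof is correct and follows essentially the same route as the paper: both reduce $af(x)+bx$ to a linear expression $\alpha x+\beta$ (your $\alpha=ac+b$, $\beta=ad$), factor out $\omega^{\beta}$, and evaluate the geometric sum as $0$ or $n$ according to whether $\alpha\not\equiv 0$ or $\alpha\equiv 0$. Nothing further is needed.
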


\begin{proof}
  If $f$ is given by a linear polynomial, then so is $af(x)+bx$ for any $a,b$. Suppose $af(x)+bx\equiv \alpha x+\beta$ for some $\alpha, \beta\in\mathbb{Z}_n$. If $\alpha\equiv 0$, then
  \begin{align*}
     \mu_f^{a,b} &= \frac{1}{n}\sum_{x=0}^{n-1}\omega^{\beta} = \omega^{\beta},
  \end{align*} which is an algebraic integer. If $\alpha\not\equiv 0$, then $\omega^\alpha\ne 1$. Consequently,
  \begin{align*}
     \mu_f^{a,b} &= \frac{1}{n}\sum_{x=0}^{n-1}\omega^{\alpha x+\beta} = \frac{\omega^\beta}{n}\left(\frac{1-\omega^{\alpha n}}{1-\omega^\alpha}\right) = 0,
  \end{align*} which is also an algebraic integer.
\end{proof}

The next result shows that it suffices to check ~\eqref{eqn:integrality} for $a\equiv 0$ (which is trivial) or $b\equiv 0$ or $a, b$ relatively prime.

\begin{proposition}
 Let $f:\mathbb{Z}_n\longrightarrow \mathbb{Z}_n$ be a function. Suppose that $\mu_f^{a,b}$ is an algebraic integer. Then $\mu_f^{ka,kb}$  is also an algebraic integer for any $k$ relatively prime to $n$.
\end{proposition}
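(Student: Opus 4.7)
The plan is to interpret $\mu_f^{ka,kb}$ as a Galois conjugate of $\mu_f^{a,b}$ and exploit the fact that the ring of algebraic integers is stable under field automorphisms.

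First I would observe that, because $\gcd(k,n)=1$, the assignment $\omega\mapsto\omega^k$ extends to an element $\sigma_k$ of the Galois group $\mathrm{Gal}(\mathbb{Q}(\omega)/\mathbb{Q})$; this is the standard description of the Galois group of a cyclotomic extension. Since $1/n\in\mathbb{Q}$ is fixed by $\sigma_k$, applying $\sigma_k$ termwise gives
\begin{equation*}
 \sigma_k\!\left(\mu_f^{a,b}\right) = \frac{1}{n}\sum_{x=0}^{n-1} \sigma_k\!\left(\omega^{af(x)+bx}\right) = \frac{1}{n}\sum_{x=0}^{n-1}\omega^{k(af(x)+bx)} = \mu_f^{ka,kb}.
\end{equation*}

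Next, I would invoke the standard fact that if $\alpha\in\mathbb{Q}(\omega)$ is an algebraic integer, then every Galois conjugate $\sigma(\alpha)$ is also an algebraic integer (one way to see this: $\alpha$ and $\sigma(\alpha)$ share the same minimal polynomial over $\mathbb{Q}$, which has integer coefficients by hypothesis). Applied with $\sigma=\sigma_k$ and $\alpha=\mu_f^{a,b}$, this immediately yields that $\mu_f^{ka,kb}$ is an algebraic integer.

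There is really no obstacle here: the proposition is essentially a restatement of Galois invariance of algebraic integrality in the cyclotomic field $\mathbb{Q}(\omega)$. The only thing to be careful about is to state clearly which Galois automorphism is being used and to note that $1/n$, being rational, is fixed by it, so the sum commutes with $\sigma_k$.
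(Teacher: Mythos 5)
Your proof is correct and is essentially identical to the paper's: both define the automorphism $\sigma_k\in\Aut(\mathbb{Q}(\omega)/\mathbb{Q})$ via $\omega\mapsto\omega^k$, identify $\mu_f^{ka,kb}$ with $\sigma_k(\mu_f^{a,b})$, and conclude by Galois stability of algebraic integers. No issues.
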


\begin{proof}
Since $k$ and $n$ are coprime, we can define an automorphism $\sigma_k\in\mathrm{Aut}(\mathbb{Q}(\omega)/\mathbb{Q})$ by $\sigma_k(\omega)=\omega^k$. Then
\begin{align*}
    \mu_f^{ka,kb} &= \frac{1}{n}\sum_{x=0}^{n-1}\omega^{k(af(x)+b)} = \frac{1}{n}\sum_{x=0}^{n-1}\sigma_k(\omega)^{af(x)+b} = \sigma_k(\mu_f^{a,b}).
  \end{align*}
Since $\mu_f^{a,b}$ is an algebraic integer and $\sigma_k\in\mathrm{Aut}(\mathbb{Q}(\omega)/\mathbb{Q})$, it follows that $\mu_f^{ka,kb}= \sigma_k(\mu_f^{a,b})$ is also an algebraic integer.
\end{proof}

Finally, we give a necessary and sufficient condition for the average of roots of unity to be an algebraic integer. This is a standard result in algebraic number theory.

\begin{lemma}\label{lem:average}
  Let $\omega_1,\ldots, \omega_n$ be roots of unity. Their average is an algebraic integer if and only if either $\omega_1+\cdots +\omega_n=0$ or $\omega_1=\cdots=\omega_n$.
\end{lemma}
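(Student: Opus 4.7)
The plan is to handle the trivial direction first and then reduce the nontrivial direction to an argument combining the triangle inequality with the fact that a nonzero algebraic integer fixed in a cyclotomic field has absolute norm at least $1$.

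For the easy direction, if $\omega_1+\cdots+\omega_n=0$ the average is $0$, which is an algebraic integer; if $\omega_1=\cdots=\omega_n=\omega$, the average is $\omega$, itself a root of unity and hence an algebraic integer. So I would dispose of this in one sentence.

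For the converse, assume $\mu:=\frac{1}{n}(\omega_1+\cdots+\omega_n)$ is an algebraic integer and that $\omega_1+\cdots+\omega_n\ne 0$; I want to conclude $\omega_1=\cdots=\omega_n$. First I would pick $N$ so that every $\omega_i$ is an $N$th root of unity, so $\mu\in\mathbb{Q}(\zeta_N)$. For any $\sigma\in\mathrm{Gal}(\mathbb{Q}(\zeta_N)/\mathbb{Q})$, $\sigma(\mu)=\frac{1}{n}\sum_i \sigma(\omega_i)$ is again an average of $N$th roots of unity, so by the triangle inequality $|\sigma(\mu)|\le 1$.

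The key step is now a norm argument. Since $\mu$ is a nonzero algebraic integer, the norm $N_{\mathbb{Q}(\zeta_N)/\mathbb{Q}}(\mu)=\prod_\sigma \sigma(\mu)$ is a nonzero rational integer, hence has absolute value at least $1$. Combined with $|\sigma(\mu)|\le 1$ for every $\sigma$, this forces $|\sigma(\mu)|=1$ for every $\sigma$, and in particular $|\mu|=1$, i.e.\ $|\omega_1+\cdots+\omega_n|=n$. I would finish by invoking the equality case of the triangle inequality for complex numbers: if unit vectors $\omega_1,\ldots,\omega_n$ satisfy $|\omega_1+\cdots+\omega_n|=\sum_i|\omega_i|$, then they are all equal. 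The only place that requires a moment's care is making sure $\mu$ being an algebraic integer really gives $\prod_\sigma \sigma(\mu)\in\mathbb{Z}$ (it lies in $\mathbb{Q}$ by Galois invariance and is an algebraic integer as a product of algebraic integers, so it lies in $\mathbb{Z}$); once that is in hand, the rest is immediate. I don't anticipate any serious obstacle, since every ingredient is standard.
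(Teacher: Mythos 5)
Your proof is correct and follows essentially the same route as the paper's: both use the triangle inequality to bound every Galois conjugate of the average by $1$, then invoke the fact that the product of the conjugates is a rational integer to force the dichotomy. You phrase it via the equality case ($\mu\neq 0$ implies norm $\geq 1$, hence all conjugates of modulus $1$, hence all $\omega_i$ equal), while the paper phrases it contrapositively (not all equal implies norm of modulus $<1$, hence $0$), but the content is identical.
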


\begin{proof}
The sufficiency is clear. Let $\mu$ denote the average of $\omega_1,\ldots, \omega_n$ and assume that it is an algebraic integer. By the triangle inequality, $|\mu|\le 1$ with equality if and only if $\omega_1=\cdots=\omega_n$. Moreover, $|\mu'|\le 1$ for all algebraic conjugates $\mu'$ of $\mu$. If not all $\omega_i$'s are equal, then $|\mu|< 1$. As a result, we also have $|\alpha|<1$, where $\alpha$ is the product of all algebraic conjugates of $\mu$. But $\alpha$ is an integer, which implies that $\alpha$ must be 0. It follows that $\mu=0$.
\end{proof}

\section{The case where $n$ is a prime}

Let $n=p$ be a prime number. Our main result is to show that any function $f:\mathbb{Z}_p\longrightarrow \mathbb{Z}_p$ satisfying \eqref{eqn:integrality} must be representable by a linear polynomial. To study a function $f:\mathbb{Z}_p\longrightarrow \mathbb{Z}_p$, it suffices to study polynomials of degree at most $p-1$. The following lemma was proved (for a general finite field) by Dickson~\cite{dickson1896analytic}.

\begin{lemma}
  For any function $f:\mathbb{Z}_p\longrightarrow \mathbb{Z}_p$, there exists a unique polynomial $F\in \mathbb{Z}_p[X]$ of degree at most $p-1$ such that
  \[f(x)\equiv F(x)  \pmod p \quad\text{ for all }x=0,1,\ldots, p-1.\]
\end{lemma}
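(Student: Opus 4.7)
The plan is to establish existence via explicit Lagrange interpolation and uniqueness via a root-counting argument, both of which hinge on the fact that $\mathbb{Z}_p$ is a field when $p$ is prime.

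For existence, I would construct for each $a\in\mathbb{Z}_p$ an indicator polynomial $L_a\in\mathbb{Z}_p[X]$ of degree at most $p-1$ satisfying $L_a(a)\equiv 1$ and $L_a(x)\equiv 0$ for $x\not\equiv a$. A convenient explicit choice is
\[
L_a(X) := 1 - (X-a)^{p-1},
\]
since by Fermat's little theorem $(x-a)^{p-1}\equiv 1 \pmod p$ whenever $x\not\equiv a$, while $(a-a)^{p-1}=0$. The polynomial $F(X) := \sum_{a=0}^{p-1} f(a)\,L_a(X)$ then has degree at most $p-1$ and interpolates $f$ at every point of $\mathbb{Z}_p$. (The classical Lagrange formula $L_a(X)=\prod_{b\not\equiv a}(X-b)(a-b)^{-1}$ works equally well.)

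For uniqueness, suppose $F,G\in\mathbb{Z}_p[X]$ both have degree at most $p-1$ and agree with $f$ on the $p$ points of $\mathbb{Z}_p$. Then $F-G$ is a polynomial of degree at most $p-1$ with $p$ roots in the field $\mathbb{Z}_p$, which forces $F-G=0$.

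The main obstacle is essentially nonexistent; this is the standard interpolation theorem transported to $\mathbb{F}_p$. The only place where primality enters is in using that $\mathbb{Z}_p$ is a field, which underlies both Fermat's little theorem in the existence step and the principle ``a polynomial of degree $d$ has at most $d$ roots'' in the uniqueness step. One could alternatively finish by a counting argument: the set of functions $\mathbb{Z}_p\to\mathbb{Z}_p$ and the set of polynomials in $\mathbb{Z}_p[X]$ of degree at most $p-1$ both have cardinality $p^p$, so the evaluation map, once shown injective, is automatically a bijection.
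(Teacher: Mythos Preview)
Your argument is correct: the Lagrange interpolation via $L_a(X)=1-(X-a)^{p-1}$ (or the product form) yields existence, and the root-counting argument in the field $\mathbb{Z}_p$ yields uniqueness. The alternative counting argument you mention is also valid and equally standard.

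As for comparison, the paper does not actually prove this lemma; it simply attributes the result (stated more generally for finite fields) to Dickson and cites his 1896 paper. So your proposal supplies a self-contained elementary proof where the paper only gives a reference. What your direct approach buys is that the reader need not consult external sources and sees exactly where primality is used; what the citation buys is brevity and an indication that the result holds over any finite field, not just $\mathbb{F}_p$.
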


Henceforth, we shall identify a function $f:\mathbb{Z}_p\longrightarrow \mathbb{Z}_p$ with its corresponding polynomial of degree at most $p-1$. For the proof of our main result, we will need to consider the following set.
\begin{definition}
  For a polynomial $f$ in $\mathbb{Z}_p[X]$, define
  \[W_f=\{\lambda\in\mathbb{Z}_p: x\mapsto (f(x)+\lambda x) \text{ is a permutation on } \mathbb{Z}_p\}.\]
\end{definition}

\begin{lemma}[Stothers {\cite[Theorem 2]{stothers1990permutation}}]\label{lem:suff-to-be-linear}
  Let $f$ be a polynomial in $\mathbb{Z}_p[X]$ of degree at most $p-1$. If $|W_f|>(p-3)/2$, then $\deg(f)\le 1$.
\end{lemma}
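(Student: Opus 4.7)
The plan is to recast the hypothesis on $W_f$ as a bound on the number of directions determined by the graph of $f$, and then invoke a Rédei-type direction theorem.

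First, I would translate membership in $W_f$. Since $g_\lambda(x):=f(x)+\lambda x$ is a permutation of $\mathbb{Z}_p$ iff it is injective, $\lambda\in W_f$ is equivalent to
\[
\lambda \not\equiv -\frac{f(u)-f(v)}{u-v} \pmod{p} \quad \text{for every } u\neq v \text{ in } \mathbb{Z}_p.
\]
Let $D(f):=\{(f(u)-f(v))/(u-v):u\neq v\}$ denote the \emph{direction set} of the graph $\{(x,f(x)):x\in\mathbb{Z}_p\}$. The above says $\lambda\in W_f$ iff $-\lambda\notin D(f)$, hence $|W_f|=p-|D(f)|$. The hypothesis $|W_f|>(p-3)/2$ becomes $|D(f)|\le(p+1)/2$.

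The main step is then to invoke the classical Rédei-type direction theorem over $\mathbb{F}_p$ (originally due to Rédei, with refinements by Lovász--Schrijver, Blokhuis, Ball, Brouwer, Storme, and Szőnyi): a function $f:\mathbb{F}_p\to\mathbb{F}_p$ whose graph determines fewer than $(p+3)/2$ directions must be affine, i.e.\ $\deg f\le 1$. Our bound $|D(f)|\le(p+1)/2<(p+3)/2$ falls within this range, so the conclusion follows at once.

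As an alternative, self-contained route, one could instead apply Hermite's criterion: $g_\lambda$ is a permutation iff $P_k(\lambda):=\sum_{x\in\mathbb{Z}_p}g_\lambda(x)^k\equiv 0$ for every $k=1,\dots,p-2$. Each $P_k$ is a polynomial in $\lambda$ of degree at most $k$, so for $k<|W_f|$ it has more roots than its degree and must vanish identically. Applying this for $1\le k\le(p-3)/2$, binomial expansion of $(f(x)+\lambda x)^k$ and extraction of the $\lambda^j$-coefficient yields vanishing of all mixed power sums $\sum_x f(x)^a x^b$ with $1\le a+b\le(p-3)/2$. The principal obstacle along this route is then to squeeze $\deg f\le 1$ out of this system of identities; this step appears to be of essentially the same depth as the Rédei-type direction theorem, which is why I would prefer the first route.
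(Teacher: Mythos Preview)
The paper gives no proof of this lemma; it is simply quoted from Stothers as a black box. Your first route is correct: the translation $|W_f|=p-|D(f)|$ is accurate, and the hypothesis then reads $|D(f)|\le (p+1)/2<(p+3)/2$, so R\'edei's direction theorem (a $p$-point set in $\mathrm{AG}(2,p)$ that is not a line determines at least $(p+3)/2$ directions) forces the graph of $f$ to be a line, i.e.\ $\deg f\le 1$. This reduction from $W_f$ to the direction set is the standard way to view results of this type, so you are not taking a detour; your remark that the Hermite-criterion route ultimately requires the same depth is also fair.
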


We now state our main result.

\begin{theorem}\label{thm:prime-linear}
   Let $f:\mathbb{Z}_p\longrightarrow \mathbb{Z}_p$ be a function. Suppose that the average
\begin{equation*}
  \mu_f^{1,b} = \frac{1}{p}\sum_{x=0}^{p-1}\omega^{f(x)+bx}
\end{equation*}
  is an algebraic integer for every $b\in\mathbb{Z}_p$. Then $f$ is a linear polynomial function.
\end{theorem}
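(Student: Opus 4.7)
The plan is to apply Lemma \ref{lem:average} pointwise in $b$ and combine the two possible outcomes with the Stothers bound. For each fixed $b \in \mathbb{Z}_p$, the average $\mu_f^{1,b}$ is the average of the $p$ roots of unity $\omega^{f(x)+bx}$ indexed by $x \in \mathbb{Z}_p$. Being an algebraic integer, Lemma \ref{lem:average} forces one of two alternatives at each $b$: either all exponents $f(x)+bx$ are congruent mod $p$ (Case A), or the sum $\sum_{x=0}^{p-1}\omega^{f(x)+bx}$ equals zero (Case B).

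Case A at some $b$ already finishes the proof: if $f(x)+bx \equiv c \pmod p$ for every $x$, then $f(x) \equiv -bx + c$ is a linear polynomial, and we are done. So we may assume Case B holds at every $b$.

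The key translation is that Case B is equivalent to saying $x \mapsto f(x)+bx$ is a permutation of $\mathbb{Z}_p$, i.e., $b \in W_f$. To see this, group the summands by residue: if $N_j = |\{x : f(x)+bx \equiv j \pmod p\}|$, then $\sum_{j=0}^{p-1} N_j \omega^j = 0$. Since the minimal polynomial of $\omega$ over $\mathbb{Q}$ is the $p$-th cyclotomic polynomial $1+X+\cdots+X^{p-1}$, any integer linear relation among $1,\omega,\ldots,\omega^{p-1}$ is a scalar multiple of $1+\omega+\cdots+\omega^{p-1}=0$. Hence $N_0 = N_1 = \cdots = N_{p-1}$, and since they sum to $p$, each equals $1$; this is exactly the statement that $x \mapsto f(x)+bx$ is a bijection on $\mathbb{Z}_p$.

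So under the standing assumption, every $b \in \mathbb{Z}_p$ lies in $W_f$, giving $|W_f| = p$. For any prime $p$, $p > (p-3)/2$, so Stothers' bound (Lemma \ref{lem:suff-to-be-linear}) yields $\deg f \le 1$, i.e.\ $f$ is a linear polynomial function. There is no real obstacle: the only subtlety is the elementary rigidity of relations among $p$-th roots of unity that converts the vanishing-sum alternative in Lemma \ref{lem:average} into the permutation condition defining $W_f$; once that observation is in place, the theorem reduces directly to Stothers' result.
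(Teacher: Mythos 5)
Your proposal is correct and follows essentially the same route as the paper's proof: split by Lemma \ref{lem:average} into the constant case (immediately linear) and the vanishing-sum case, use the minimal polynomial of $\omega$ to conclude $x\mapsto f(x)+bx$ is a permutation for every $b$, and then apply Stothers' bound to $|W_f|=p$. Your write-up merely makes explicit the counting argument ($N_0=\cdots=N_{p-1}=1$) that the paper leaves implicit.
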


\begin{proof}

By Lemma~\ref{lem:average}, we have that, for each $b$, either $f(x)+bx$ is constant modulo $p$ or $\mu_f^{1,b}=0$. Suppose there is $b_0\in\mathbb{Z}_p$ such that $f(x)+b_0x$ is constant modulo $p$. Then it is clear that $f$ is of the form $f(x)\equiv\alpha x+\beta$ for all $x\in \mathbb{Z}_p$.

 If there is no $b$ with $f(x)+bx$ constant modulo $p$, then $\sum_{x=0}^{p-1}\omega^{f(x)+bx} = 0$ for all $b$. Since the minimal polynomial of $\omega$ is $1+X+\cdots+X^{p-1}$, we must have that $x\mapsto f(x)+bx$ is a permutation modulo $p$, for all $b$. This means that the set $W_f$ has cardinality $p$. Hence, by Lemma~\ref{lem:suff-to-be-linear}, $f$ is a linear polynomial.
\end{proof}

\begin{corollary}\label{cor:prime-linear}
  Let $f:\mathbb{Z}_p\longrightarrow \mathbb{Z}_p$ be a function. Suppose that the average
\begin{equation*}
  \mu_f^{a,b} = \frac{1}{p}\sum_{x=0}^{p-1}\omega^{f(x)+bx}
\end{equation*}
  is an algebraic integer for every $a,b\in\mathbb{Z}_p$. Then $f$ is a linear polynomial function.
\end{corollary}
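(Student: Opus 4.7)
The plan is simply to specialize the hypothesis to $a = 1$ and invoke Theorem~\ref{thm:prime-linear}. If $\mu_f^{a,b}$ is an algebraic integer for every $(a,b) \in \mathbb{Z}_p \times \mathbb{Z}_p$, then in particular $\mu_f^{1,b}$ is an algebraic integer for every $b \in \mathbb{Z}_p$. But this is precisely the hypothesis of Theorem~\ref{thm:prime-linear}, whose conclusion is that $f$ is representable by a linear polynomial. So the entire proof reduces to a single line of ``specialize and apply''.

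There is essentially no obstacle to overcome: the corollary is formally weaker than Theorem~\ref{thm:prime-linear}, since the theorem already draws the same conclusion from the smaller sub-family indexed only by $a=1$, rather than from the full two-parameter family indexed by $(a,b) \in \mathbb{Z}_p \times \mathbb{Z}_p$. The purpose of stating this corollary is presumably to record the conclusion in the form that will be used for the application to self perfect isometries, where the integrality hypothesis naturally arises over all pairs $(a,b)$ rather than in the sharper $a=1$ form. In writing the proof, the only point worth being explicit about is matching the displayed expression $\mu_f^{a,b}$ to the definition~\eqref{eqn:integrality} from the introduction, so that specializing $a=1$ genuinely produces the average appearing in Theorem~\ref{thm:prime-linear}.
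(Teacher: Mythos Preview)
Your proposal is correct and matches the paper's own proof exactly: the paper simply writes ``Take $a=1$ and apply Theorem~\ref{thm:prime-linear}.'' Your additional commentary about why the corollary is formally weaker and why it is stated separately is accurate and appropriate.
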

\begin{proof}
  Take $a=1$ and apply Theorem~\ref{thm:prime-linear}.
\end{proof}

We have seen (in Lemma \ref{lem:linear-implies-integrality}) that if a function $f:\mathbb{Z}_n\longrightarrow \mathbb{Z}_n$ is representable by a linear polynomial, then it satisfies \eqref{eqn:integrality}. Corollary~\ref{cor:prime-linear} implies that the converse also holds for $n$ prime. It is natural to make the following conjecture.

\begin{conjecture}
  Suppose $f:\mathbb{Z}_n\longrightarrow \mathbb{Z}_n$ is a function such that the average $\mu_f^{a,b}$ is an algebraic integer for every $a,b\in\mathbb{Z}_n$. Then there exist $\alpha,\beta\in\mathbb{Z}_n$ such that $f(x)\equiv \alpha x+\beta$ for all $x$.
\end{conjecture}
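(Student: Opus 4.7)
My plan is to invoke Parseval's identity on $\mathbb{Z}_n$; this bypasses the need for Stothers' theorem (or any prime-specific input), and in fact shows that the weaker hypothesis ``$\mu_f^{1,b}$ is an algebraic integer for every $b$'' already suffices.

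The argument has two steps. First, apply Lemma~\ref{lem:average} to the $n$ roots of unity $\omega^{f(x)+bx}$, $x = 0, 1, \ldots, n-1$. For each $b \in \mathbb{Z}_n$, the integrality of $\mu_f^{1,b}$ forces
\[
    \sum_{x=0}^{n-1}\omega^{f(x)+bx} = 0 \quad\text{or}\quad f(x)+bx \text{ is constant modulo } n.
\]
Second, let $g\colon\mathbb{Z}_n\to\mathbb{C}$ be $g(x)=\omega^{f(x)}$; since $|g(x)|=1$, the orthogonality of characters $\sum_{b=0}^{n-1}\omega^{b(x-y)} = n$ if $x\equiv y\pmod n$ and $0$ otherwise yields Parseval's identity
\[
    \sum_{b=0}^{n-1}\Bigl|\sum_{x=0}^{n-1}\omega^{f(x)+bx}\Bigr|^2 = n\sum_{x=0}^{n-1}|g(x)|^2 = n^2.
\]
Since the right-hand side is strictly positive, there exists some $b_0\in\mathbb{Z}_n$ for which the first alternative fails, so the second alternative holds at $b_0$: there is $c_0\in\mathbb{Z}_n$ with $f(x)+b_0 x\equiv c_0 \pmod n$ for all $x$. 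Therefore $f(x)\equiv -b_0 x + c_0 \pmod n$, a linear polynomial function.

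I do not anticipate a serious obstacle. The Parseval identity on the cyclic group $\mathbb{Z}_n$ is classical, and it rules out the possibility that $\sum_x\omega^{f(x)+bx}=0$ holds simultaneously for every $b$. That possibility is precisely the scenario handled in the proof of Theorem~\ref{thm:prime-linear} via Stothers' permutation theorem; the Parseval approach disposes of it uniformly in $n$ without any permutation-theoretic input. The one point worth verifying carefully is that Lemma~\ref{lem:average} is applied correctly, but it is stated for any $n$ roots of unity and so applies verbatim here.
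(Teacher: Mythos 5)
Your argument is correct, and it is worth being explicit that the paper contains no proof of this statement: it is left there as an open conjecture, with only the prime case established (Theorem~\ref{thm:prime-linear} and Corollary~\ref{cor:prime-linear}). Each step of your proposal checks out. Lemma~\ref{lem:average}, applied to the $n$ roots of unity $\omega^{f(x)+bx}$, $x=0,\dots,n-1$, gives exactly the stated dichotomy for each $b$; the orthogonality relation $\sum_{b=0}^{n-1}\omega^{b(x-y)}=n\,[x\equiv y]$ yields $\sum_{b=0}^{n-1}\bigl|\sum_{x=0}^{n-1}\omega^{f(x)+bx}\bigr|^2=n^2>0$, so the sums cannot all vanish; and at any $b_0$ where the sum is nonzero, integrality forces $\omega^{f(x)+b_0x}$ to be independent of $x$, i.e.\ $f(x)\equiv -b_0x+f(0)\pmod n$. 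The contrast with the paper's method is instructive. In the prime case the authors dispose of the ``all sums vanish'' branch by noting that $\sum_x\omega^{h(x)}=0$ forces $h$ to be a permutation of $\mathbb{Z}_p$ (via the minimal polynomial $1+X+\cdots+X^{p-1}$) and then invoking Stothers' theorem on $W_f$; that route is intrinsically prime-specific, since for composite $n$ a vanishing sum of $n$th roots of unity need not arise from a permutation. Your Parseval computation shows instead that the vanishing branch is globally impossible for every $n$, requires no permutation-theoretic input, and confirms your side observation that integrality of $\mu_f^{1,b}$ for all $b$ already suffices. In short: the proposal is a complete and more general proof, by a genuinely different and more elementary route than the paper's partial result.
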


\begin{remark}
  It is possible that a function $f$ may have a non-linear form and still satisfy \eqref{eqn:integrality}. The conjecture says that such function should be representable by a linear polynomial in $\mathbb{Z}_n[X]$.

  For example, when $n=6$, it can be checked that $f(x)\equiv x^3+x$ satisfies \eqref{eqn:integrality}. However $f$ is representable by a linear polynomial, since $x^3+x\equiv 2x$ for all $x\in\mathbb{Z}_6$.
\end{remark}

\section{Connection to Perfect Isometries}

Corollary~\ref{cor:prime-linear} has an application in representation theory of finite groups, especially in the problem of finding self perfect isometries for the cyclic group $C_p$ of order prime $p$. For the purpose of illustrations, we will define perfect isometries specifically for this special case. Interested readers are referred to \cite{Broue1990isometries} for the definition of perfect isometries for general blocks of finite groups.

Throughout this section, let $G=C_p$. Denote by $\mathcal{R}(G)$ the free abelian group generated by $\Irr(G)$, the set of all irreducible complex characters of $G$. We will regard $\mathcal{R}(G)$ as lying in $CF(G)$\footnote{This is an inner product space with the standard inner product of group characters.}, the space of complex-valued class functions of $G$.

Let $I:\mathcal{R}(G)\longrightarrow\mathcal{R}(G)$ be a linear map. Define a generalized character $\mu_I$ of $G\times G$ by
\[\mu_I(g,h)=\sum_{\chi\in\Irr(G)}I(\chi)(g)\chi(h),\quad\text{for all } g,h\in G.\]

\begin{definition}(Cf. Definition 1.1 in \cite{Broue1990isometries})
An isometry $I:\mathcal{R}(G)\longrightarrow\mathcal{R}(G)$  is said to be a \emph{perfect isometry} if $\mu_I$ satisfies the following two conditions.
\begin{enumerate}
  \item[(i)] (Integrality) For all $g, h\in G$, the number $\mu_I(g,h)/p$ is an algebraic integer.
  \item[(ii)] (Separation) If $\mu_I(g,h)\ne 0$, then both $g$ and $h$ are the identity element or both are not.
\end{enumerate}
\end{definition}

Let $\omega=e^{2\pi i/p}$. Suppose that $G$ is generated by an element $u\in G$. For $x=0,1,\ldots, p-1$, let $\chi_x$ be the irreducible complex character of $G$ such that
\[\chi_x(u^a) =\omega^{ax},\quad a=0,1,\ldots p-1.\]
In particular, $\chi_0$ is the trivial character.

Any bijection $f$ on the set $\{0,1,\ldots, p-1\}$ gives rise to an isometry $I_f:\mathcal{R}(G)\longrightarrow\mathcal{R}(G)$ defined (on the basis) by
\[I_f(\chi_x)=\chi_{f(x)},\quad x=0,1 \ldots, p-1.\]

\begin{proposition}\label{prop-perfect-iso}
  An isometry $I_f$ is perfect if and only if $f$ is a linear bijection.
\end{proposition}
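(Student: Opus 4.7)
The plan is to translate the two perfectness axioms for $I_f$ into statements about the averages $\mu_f^{a,b}$ studied earlier, and then invoke Corollary~\ref{cor:prime-linear} and Lemma~\ref{lem:linear-implies-integrality}. First I would unfold the definition: since $I_f(\chi_x)=\chi_{f(x)}$ and $\chi_y(u^c)=\omega^{cy}$, a direct calculation yields $\mu_{I_f}(u^a,u^b)=\sum_{x=0}^{p-1}\omega^{af(x)+bx}=p\,\mu_f^{a,b}$. So the integrality axiom on $\mu_{I_f}$ is precisely the statement that every $\mu_f^{a,b}$ is an algebraic integer.

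Next I would dispose of the separation axiom by a short case check, using only that $f$ is a bijection. When $a=b=0$ the sum equals $p\ne 0$. When $a=0$ and $b\ne 0$, the inner sum $\sum_x \omega^{bx}$ vanishes; when $a\ne 0$ and $b=0$, the map $x\mapsto af(x)$ is a bijection of $\mathbb{Z}_p$ (since $a$ is a unit and $f$ is a bijection), so $\sum_x \omega^{af(x)}=\sum_{y\in\mathbb{Z}_p}\omega^y=0$. Hence separation is automatic for any bijection $f$, and perfectness of $I_f$ reduces to the integrality of $\mu_f^{a,b}$ for all $a,b$.

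From this equivalence the proposition follows in both directions. For the forward implication, integrality together with Corollary~\ref{cor:prime-linear} forces $f$ to be representable by a linear polynomial; since $f$ is a bijection, the leading coefficient $\alpha$ of that polynomial must be nonzero in $\mathbb{Z}_p$, making $f$ a linear bijection. For the converse, if $f$ is a linear bijection then Lemma~\ref{lem:linear-implies-integrality} gives integrality, and separation is free by the previous paragraph. One further thing to note in passing is that $I_f$ is automatically an isometry because it permutes the orthonormal basis $\Irr(G)$, so no separate isometry check is required.

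I do not anticipate a genuine obstacle here: the proposition is essentially a dictionary translation packaging the main theorem of the previous section. The mild subtlety is bookkeeping where bijectivity of $f$ is used, namely to make the separation check automatic and to guarantee that the linear polynomial extracted from Corollary~\ref{cor:prime-linear} has nonzero slope.
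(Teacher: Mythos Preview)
Your proposal is correct and follows essentially the same route as the paper: unfold $\mu_{I_f}(u^a,u^b)$ as $\sum_x\omega^{af(x)+bx}$, observe that separation is automatic for any bijection $f$, and then identify the integrality condition with \eqref{eqn:integrality} so that Lemma~\ref{lem:linear-implies-integrality} and Corollary~\ref{cor:prime-linear} give the two directions. You in fact supply more detail than the paper does---the explicit case check for separation and the remark that bijectivity forces the linear coefficient to be nonzero---but the architecture is identical.
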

\begin{proof}
  Since every element in $G$ is of the form $u^a$ for some $a$, we have
  \[\mu_I(g,h)=\mu_I(u^a,u^b)=\sum_{x=0}^{p-1}I(\chi_x)(u^a)\chi_x(u^b)= \sum_{x=0}^{p-1} \omega^{af(x)+bx}.\]
Thus, we see that the condition in (1) is precisely the requirement that $\mu_I$ satisfies the integrality condition. This is the only condition to consider, as the separation condition is satisfied for any bijection $f$.

If $f$ is a linear bijection, then by Lemma \ref{lem:linear-implies-integrality}, the integrality condition is satisfied. Thus, $I_f$ is a perfect isometry.

Conversely, if $I_f$ is a perfect isometry, then $\mu_I(u^a,u^b)/p$ is an algebraic integer for all $a,b$. It follows from Corollary~\ref{cor:prime-linear} that $f$ must be linear.
\end{proof}

\begin{remark}
The following actions on $\Irr(G)$ are well known to give bijections on the set.
  \begin{itemize}
    \item (Multiplication by a linear character) For a fixed $\chi_\beta\in \Irr(G)$, multiplication by $\chi_\beta$ gives a bijection
    \[I_\beta:\Irr(G)\longrightarrow\Irr(G), \quad I_\beta(\chi_x)(g)=(\chi_\beta\chi_x)(g)=\chi_{\beta+x}(g).\]
    \item (Automorphism action) For a fixed $\alpha\in\{1,2,\ldots,p-1\}$, the automorphism $g\mapsto g^\alpha$ induces a bijection
    \[I_\alpha:\Irr(G)\longrightarrow\Irr(G), \quad I_\alpha(\chi_x)(g)=\chi_x(g^\alpha)=\chi_{\alpha x}(g).\]
  \end{itemize}
  Proposition \ref{prop-perfect-iso} implies that, for an isometry induced by a bijection on $\Irr(G)$ to be a perfect isometry, it must be a composition of the above two types of isometries.
\end{remark}



\begin{thebibliography}{1}
\bibitem{Broue1990isometries} M. Brou{\'e}, Isom\'etries parfaites, types de blocs, cat\'egories
              d\'eriv\'ees, \textit{Ast\'erisque} \textbf{181} no. 182 (1990) 61--192.

\bibitem{dickson1896analytic} L. R. Dickson, The analytic representation of substitutions on a power of a
              prime number of letters with a discussion of the linear group, \textit{Ann. of Math.} \textbf{11} no. 1/6 (1896) 65--120.

\bibitem{stothers1990permutation} W. W. Stothers, On permutation polynomials whose difference is linear, \textit{Glasgow Math. J.} \textbf{32} no. 2 (1990) 165--171.

\end{thebibliography}
\end{document}